\def\blfootnote{\gdef\@thefnmark{}\@footnotetext}
\renewcommand\mathcal{\mathscr}
\renewcommand{\emph}{\normalem}
\theoremstyle{plain}
\newtheorem{theorem}{Theorem}[section]
\newtheorem*{theorem*}{Theorem}
\newtheorem{lemma}[theorem]{Lemma}
\newtheorem*{lemma*}{Lemma}
\newtheorem{corollary}[theorem]{Corollary}
\theoremstyle{remark}
\newtheorem{remark}[theorem]{Remark}
\newtheorem*{remark*}{Remark}
\theoremstyle{definition}
\newtheorem*{definition*}{Definition}
\newtheorem{Basic assumptions}[theorem]{Basic assumptions}
\numberwithin{equation}{section}
\newcommand\quant{\advance\quantno by1
                      \ifnum\quantno=1\qquad\else\quad\fi\forall }
\newcommand\itemno[1]{(\romannumeral #1)}
\renewcommand\Im{\operatorname{\mathrm{Im}}}
\newcommand\rest[1]{\kern-.1em
          \lower.5ex\hbox{$\scriptstyle #1$}\kern.05em}
\renewcommand\mod[1]{\vert{#1}\vert}
\newcommand\bigmod[1]{\bigl\vert{#1}\bigr|}
\newcommand\norm[2]{{\Vert{#1}\Vert_{#2}}}
\newcommand\bignorm[2]{\left.{\bigl\Vert{#1}\bigr\Vert_{#2}}\right.}
\newcommand\Bignorm[2]{\left.{\Bigl\Vert{#1}\Bigr\Vert_{#2}}\right.}
\newcommand\wrt{\,\text{\rm d}}
\newcommand\bS{\mathbf{S}}
\newcommand\BC{\mathbb{C}}
\newcommand\BN{\mathbb{N}}
\newcommand\BR{\mathbb{R}}
\newcommand\BT{\mathbb{T}}
\newcommand\BZ{\mathbb{Z}}
\newcommand\cA{\mathcal{A}}
\newcommand\cD{\mathcal{D}}
\newcommand\cF{\mathcal{F}}
\newcommand\cM{\mathcal{M}}
  \newcommand\fT{\mathcal{T}}
\newcommand\ga{\gamma}    \newcommand\Ga{\Gamma}
\newcommand\de{\delta}    \newcommand\De{\Delta}
  \newcommand\vep{\varepsilon}
\newcommand\la{\lambda}   
\newcommand\om{\omega}    \newcommand\Om{\Omega}         
\newcommand\si{\sigma}         
\newcommand\vp{\varphi}
\newcommand\OV{\overline}
\newcommand\funnyk{k\hbox to 0pt{\hss\phantom{g}}}
\newcommand\lu[1]{L^1(#1)}
\newcommand\lp[1]{L^p(#1)}
\newcommand\ly[1]{L^\infty(#1)}
\newcommand\lorentz[3]{L^{#1,#2}(#3)}
\newcommand\Cvp[1]{Cv_p(#1)}
\newcommand\Mp[1]{{\cM}_p(#1)}
\newcommand\Md[1]{{\cM}_2(#1)}
\newcommand\bc{\mathbf{c}}
\newcommand\wt{\widetilde}
\newcommand\whH{\widehat{\phantom{G}}\hbox to 0pt{\hss $H$}}
\newcommand\emspace{\hbox to 6pt{\hss}}
\newcommand\ds{\displaystyle}
\newcommand\rmi{\hbox{\rm (i)}}
\newcommand\rmii{\hbox{\rm (ii)}}
\newcommand\rmiv{\hbox{\rm (iv)}}
\newcommand\One{{\mathbf{1}}}
\newcommand\sft[1]{\wt{#1}}
\newcommand\bSp{\bS_{\de(p)}}
\DeclareSymbolFont{EUEX}{U}{euex}{m}{n}
\DeclareSymbolFont{euexlargesymbols}{U}{euex}{m}{n}
\DeclareMathSymbol{\intop}{\mathop}{euexlargesymbols}{"52}
     \def\int{\intop\nolimits}
\DeclareSymbolFont{euexsymbols}     {U}{euex}{m}{n}
\DeclareMathSymbol{\smallint}{\mathop}{euexsymbols}{"52}
\begin{document}

\title[$L^p$ spherical multipliers on homogeneous trees]
{$L^p$ spherical multipliers \\ on homogeneous trees}

\subjclass[2010]{Primary 43A90, 20E08, 43A85}

\keywords{spherical multiplier, homogeneous tree, harmonic analysis}

\thanks{Work partially supported by PRIN 2010 ``Real and complex manifolds:
geometry, topology and harmonic analysis".
The first two named authors are members of the Gruppo Nazionale per
l'Analisi Matematica, la Probabilit\`a e le loro Applicazioni
(GNAMPA) of the Istituto Nazionale di Alta Matematica (INdAM).
The research of the third named author was carried over while he was \textit{Assegnista di ricerca}
at the Universit\`a di Milano-Bicocca.  He was supported by PRIN 2010 ``Real and complex manifolds:
geometry, topology and harmonic analysis", by Polish funds for
sciences, National Science Centre (NCN), Poland, Research Project
2014\slash 15\slash D\slash ST1\slash 00405, 
and by Foundation for Polish Science - START scholarship.
}

\author[D. Celotto, S. Meda and B. Wr\'obel]
{Dario Celotto, Stefano Meda and B\l a\.zej Wr\'obel}

\address{Dario Celotto:
Dipartimento di Matematica e Applicazioni
\\ Universit\`a di Milano-Bicocca\\
via R.~Cozzi 53\\ I-20125 Milano\\ Italy
\hfill\break
d.celotto@campus.unimib.it}


\address{Stefano Meda:
Dipartimento di Matematica e Applicazioni
\\ Universit\`a di Milano-Bicocca\\
via R.~Cozzi 53\\ I-20125 Milano\\ Italy
\hfill\break
stefano.meda@unimib.it}



\address{B\l a\.zej Wr\'obel:
Mathematical Institute
\\ 	Universit\"at Bonn\\
Endenicher Allee 60\\ D--53115 Bonn\\ Germany
\newline \&
Instytut Matematyczny, Uniwersytet Wroc\l awski,
pl. Grunwaldzki 2/4, 50-384 Wroc\l aw, Poland
\hfill\break
blazej.wrobel@math.uni.wroc.pl}

\maketitle


\begin{abstract}
We characterise, for each $p$ in $[1,\infty) \setminus \{2\}$, 
the class of $L^p$ spherical multipliers on homogeneous trees in terms of 
$L^p$ Fourier multipliers on the torus.   
\end{abstract}

\setcounter{section}{0}
\section{Introduction} \label{s:Introduction}

A homogeneous tree of degree $q$ is a connected graph $\fT$ with no loops
such that any point $x$ of $\fT$ has exactly~$q+1$ neighbours.   
Henceforth we assume that $q\geq 2$.  
We endow~$\fT$ with the counting measure and the natural distance.

Fix an arbitrary reference point $o$ in $\fT$, denote by $G$ the group of isometries of $\fT$ and 
by $G_o$ the stabiliser of $o$ in $G$.  The group $G_o$ is a maximal compact subgroup of $G$.  
The map $g\mapsto g \cdot o$ identifies $\fT$ with the coset space $G/G_o$. 
Thus, a function $f$ on $\fT$ gives rise to a $G_o$-invariant
function $f'$ on $G$ by the formula $f'(g)=f(g\cdot o)$, and every
$G_o$-invariant function arises in this way.
The distance of $x$ from $o$ will be denoted by $\mod{x}$. 
A function $f$ on $\fT$ is called radial if $f(x)$ depends only on $\mod{x}$,
or equivalently if $f$ is $G_o$-invariant, or if $f'$ is $G_o$--bi-invariant.

It is well known that $G$-invariant linear operators on $\lp{\fT}$ correspond to 
bounded linear operators on $\lp{G/G_o}$ given by convolution on the right with
$G_o$--bi-invariant kernels.  We denote by $\Cvp{\fT}$ the space of radial functions 
on~$\fT$ associated to these $G_o$--bi-invariant kernels.  The norm of an element $k$
in $\Cvp{\fT}$ is then defined as the norm of the corresponding operator
on $\lp{G/G_o}$, equivalently as the norm of the associated $G_o$-invariant operator on $\lp{\fT}$, 
and it is denoted by $\bignorm{k}{\Cvp{\fT}}$.  

We also denote by $\Cvp{\BZ}$ the space of the convolution kernels 
associated to the translation invariant operators on $\lp{\BZ}$.  The norm of a function 
$k$ in $\Cvp{\BZ}$ is the $\lp{\BZ}$ operator norm of the corresponding convolution operator.  
Set $\tau := 2\pi/\log q$, and  denote by $\cF$ the Fourier transformation on $\BZ$, given by
\begin{equation} \label{f: Fourier T}
\cF F(s) 
= \sum_{d\in \BZ} \, F(d)\,q^{-ids}
\quant s\in\BT,  
\end{equation}
where $\BT = \BR/(\tau\BZ)$.  We denote by $\Mp{\BT}$ the space of all (bounded)
functions on $\BT$ of the form $\cF k$, where $k$ is in $\Cvp{\BZ}$.  The norm of 
a function $\cF k$ in $\Mp{\BT}$ is then defined to be the norm of $k$ in $\Cvp{\BZ}$.  

The analogue on trees of a celebrated result of J.L.~Clerc and E.M.~Stein \cite{CSt} states
that if $k$ is in $\Cvp{\fT}$, then its spherical Fourier transform $\wt k$ extends to a
bounded holomorphic function on the strip $\bSp$ (see Section~\ref{s: Background material} for 
the definition of $\bSp$).  This necessary condition was sharpened by   
M.~Cowling, Meda and A.G.~Setti \cite[Theorem~2.1]{CMS2}, who proved that if $k$ is in $\Cvp{\fT}$,
then the boundary values $\wt k_{\de(p)}$ of $\wt k$ on the strip $\bSp$ belong to $\Mp{\BT}$. 
They also proved that this condition implies that convolution with $k$ on the right
is a bounded operator on the space of all \emph{radial} functions in $\lp{\fT}$.  
The work of these authors was inspired by previous work of by R.~Szwarc \cite{Sz} and T.~Pytlik \cite{P}. 
In particular, Pytlik showed that a nonnegative radial function
$k$ is in $\Cvp{\fT}$ if and only if $k$ belongs to the Lorentz space $\lorentz{p}{1}{\fT}$.  
This eventually led Cowling, Meda and Setti to prove a sharp form of the Kunze--Stein phenomenon 
on the full group $G$ \cite[Theorem~1]{CMS2} (see also \cite{N} for a previous less precise version 
of this phenomenon), and A.~Veca to complement this result by proving an endpoint for $p=2$.  
We shall prove the following.  

\begin{theorem*}  
Suppose that $p$ is in $[1,\infty)\setminus \{2\}$, and that $k$ is a radial function on $\fT$.  
Then $k$ is in $\Cvp{\fT}$ if and only if its spherical Fourier transform
$\wt k$ extends to a Weyl-invariant function on $\bSp$ and $\wt k_{\de(p)}$ is in $\Mp{\BT}$.  
\end{theorem*}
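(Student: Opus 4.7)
The forward implication ``$k\in\Cvp{\fT}\Rightarrow \wt k_{\de(p)}\in\Mp{\BT}$'' is essentially the content of the cited result \cite[Theorem~2.1]{CMS2}: if $k\in\Cvp{\fT}$, then $\wt k$ extends to a bounded holomorphic function on the interior of $\bSp$ whose boundary trace on $\Im\lambda=\de(p)$ lies in $\Mp{\BT}$. Weyl-invariance ($\wt k(z)=\wt k(-z)$) holds on $\BR$ because $k$ is radial and the spherical functions on $\fT$ satisfy $\phi_\lambda=\phi_{-\lambda}$, so by analytic continuation it persists on the symmetric strip $\bSp$, with the boundary values taken in the appropriate sense.

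\textbf{Sufficiency: setup.} The main content is the converse. Suppose $\wt k$ extends to a Weyl-invariant function on $\bSp$ and that $m:=\wt k_{\de(p)}$ is in $\Mp{\BT}$; write $m=\cF\kappa$ with $\kappa\in\Cvp{\BZ}$. The plan is to shift the contour in the spherical Fourier inversion formula $k(x)=c_q\int_\BT \wt k(\lambda)\phi_\lambda(x)\planc$ from the real line up to $\Im\lambda=\de(p)$. The holomorphy of $\wt k$ inside $\bSp$ (combined with the bound $\|\wt k_{\de(p)}\|_\infty\le \|m\|_{\Mp{\BT}}$) justifies this shift, and on the new contour the Harish-Chandra expansion
$$\phi_{\lambda+i\de(p)}(x)=c(\lambda+i\de(p))\,q^{(i\lambda-1/p)|x|}+c(-\lambda-i\de(p))\,q^{(-i\lambda+1/p-1)|x|}\qquad(|x|\ge 1)$$
is valid. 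Using Weyl-invariance of $\wt k$ to fold the change of variables $\lambda\mapsto -\lambda$, the two summands collapse into a single integral whose ``oscillatory factor'' is $q^{i\lambda|x|}$ and whose remaining factor is a radial ``Poisson-type'' profile with decay $q^{-|x|/p}$.

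\textbf{Sufficiency: factorisation through $\BZ$-convolution.} The resulting formula for $k$ permits a factorisation of the operator $f\mapsto k*f$ through convolution by $\kappa$ on $\BZ$. Concretely, one writes $k*f=\cH^{*}\bigl(M_\kappa\otimes \mathrm{Id}\bigr)\cH f$, where $\cH$ is a Helgason-type transform from $\fT$ to functions of a horocyclic variable in $\BZ$ and a boundary variable in $\partial\fT$, incorporating the weight $q^{-|x|/p}$, and $M_\kappa$ denotes $\BZ$-convolution by $\kappa$ acting only in the horocyclic variable. The operator $M_\kappa$ is bounded on $\lp{\BZ\times\partial\fT}$ precisely because $\kappa\in\Cvp{\BZ}$ (by Fubini). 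The boundedness of $\cH$ and $\cH^{*}$ between $\lp{\fT}$ and the mixed space reduces to estimating radial $L^p$ convolvers whose kernels have critical decay $q^{-|x|/p}$; the needed estimates rest on Pytlik's Lorentz-space characterisation and the $\lorentz{p}{1}{\fT}$-membership of the weighted Poisson kernels obtained in the contour shift.

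\textbf{Main obstacle.} The delicate point is making the Helgason-type factorisation rigorous while correctly handling the $c$-function on the shifted contour. The $c$-function has simple poles at the Weyl-fixed points of $\bSp$, and it is precisely the Weyl-invariance of $\wt k$ that cancels these singularities so that the combined integrand is regular. A second, intertwined difficulty is that the radial building blocks $q^{-|x|/p}$ sit exactly at the endpoint of Pytlik's criterion and are \emph{not} themselves in $\Cvp{\fT}$; the gain must come from averaging against the multiplier $m$ on $\BT$, not from each frequency separately. Quantifying this cancellation is what promotes the boundedness on radial $L^p$ functions established in \cite{CMS2} to boundedness on all of $\lp{\fT}$, and thereby produces the desired membership $k\in\Cvp{\fT}$.
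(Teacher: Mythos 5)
Your necessity argument and your opening move for sufficiency (shifting the contour in the spherical inversion formula to $\Im\la=\de(p)$ and using Weyl-invariance to fold the two terms of the $\bc$-function expansion) coincide with the paper's. But from that point on the proposal is a plan, not a proof, and its decisive step is left open — as you yourself acknowledge. The factorisation $k*f=\cH^{*}(M_\kappa\otimes\mathrm{Id})\cH f$ through a Helgason-type transform is never justified: you reduce the boundedness of $\cH$ and $\cH^{*}$ to radial convolvers with critical decay $q^{-|x|/p}$, then correctly observe that these sit exactly at the endpoint of Pytlik's $\lorentz{p}{1}{\fT}$ criterion and are \emph{not} in $\Cvp{\fT}$, and finally appeal to an unquantified ``cancellation from averaging against $m$'' to rescue the argument. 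That cancellation is precisely the content of the theorem — promoting the boundedness on radial functions from \cite{CMS2} to all of $\lp{\fT}$ — so the proposal assumes what it must prove.

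The paper closes this gap by a different mechanism in which no oscillatory cancellation among frequencies is invoked. After the contour shift one has $k(x)=q^{-|x|/p}\,\vp(|x|)$, where $\cF\vp$ agrees, up to a smooth nonvanishing factor, with $\wt k_{\de(p)}$. One then passes to Iwasawa-type coordinates $x=v\si^{j}\cdot o$ on the solvable group $NA$ and splits $k=k\chi^{-}+k\chi^{+}$ according to the sign of the height $j$. For $k\chi^{+}$ a direct weighted $\lu{NA}$ estimate suffices, using the evenness of the Abel transform and the convergence of $\sum_{j\ge 0} q^{j/p'-j/p}$ for $p<2$. For $k\chi^{-}$, the transference theorem for semidirect products (Theorem~\ref{t: genTransf}) bounds $\bignorm{k\chi^-}{\Cvp{NA}}$ by $\int_N \bignorm{\vp\,\One_{[\mod{v\cdot o},\infty)}}{\Cvp{\BZ}}\, Q_p(v)\wrt v$; the truncated convolutor is then controlled by Theorem~\ref{t: conv lpBZ}, whose hypothesis — that $\cF\vp$ extends holomorphically to a strip — is supplied by the Weyl-invariance of $\wt k$ together with $\Mp{\BT}\subseteq\ly{\BT}$, and whose conclusion costs only a factor $O(1+\mod{v\cdot o})$, which is integrable against $Q_p$ on $N$ by Lemma~\ref{l: PoissKer}. (The cases $p=1$ and $p>2$ are handled separately, by an $\lu{\fT}$ computation and by duality.) To complete your write-up you would need either this transference route or a genuinely new quantitative argument for the cancellation you invoke; as it stands, the key estimate is missing.
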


\noindent
The proof combines techniques from \cite{CMS2} and a generalisation
of a transference result of A.D.~Ionescu \cite{I1} for rank one noncompact symmetric spaces.  

Our paper is organised as follows.  Section~\ref{s: Background material} 
provides some background and preliminary results. 
Section~\ref{s: A general transference result} contains a general transference result, 
which is of independent interest and may be applied to other situations.
The main result is proved in Section~\ref{s: Spherical multipliers on a tree}.

We shall use the ``variable constant convention'', and denote by $C,$
possibly with sub- or superscripts, a constant that may vary from place to 
place and may depend on any factor quantified (implicitly or explicitly) 
before its occurrence, but not on factors quantified afterwards.

\section{Background material and preliminary results}
\label{s: Background material}



We now summarise the main features of spherical harmonic analysis on~$\fT$. 
Standard references concerning harmonic analysis on trees are the books \cite{FTP,FTN}.  
Our notation is consistent with that of the papers \cite{CMS1,CMS2,CMS3}.
The reader is also referred to the papers
\cite{CS,MS1,MS2,Se1,Se2} for various related aspects of harmonic analysis on homogeneous trees.   
The spherical functions are the radial eigenfunctions of the standard nearest neighbour Laplacian
satisfying the normalisation condition $\phi (o)=1$, and are given by the formula
$$
\phi_z (x) = 
\begin{cases}
\ds\Big(1 + \frac{q-1}{q+1} \mod{ x }\Big) \, q^{-\mod{ x }/2}
&         \hbox{$\quant z\in \tau{\BZ}$} \\
\ds\Big ( 1 + \frac{q-1}{q+1} \mod{ x }\Big) 
                \, q^{- \mod{ x }/2} (-1)^{\mod{ x}}
&         \hbox{$ \quant z\in \tau/2 +\tau{\BZ}$} \\
\ds\bc (z) \, q^{(iz-1/2) \mod{ x}} + \bc (-z)\, q^{(-iz-1/2)\mod{x}}
&        \hbox{$ \quant z\in \BC\setminus (\tau/2) {\BZ}$}, \\
\end{cases}
$$
where $\tau := 2\pi/\log q$ and $\bc$ is the meromorphic function defined by the rule
$$
\bc (z) 
= \frac{q^{1/2}}{q+1} \,
\frac{q^{1/2+iz} - q^{-1/2-iz} }{q^{iz} -  q^{-iz}}
\quant z \in \BC\setminus (\tau/2) {\BZ}.
$$
It is straightforward to check that for each $x$ in $\fT$ the function $z \mapsto \phi_z (x)$ 
is entire and that 
$$
\bigmod{\phi_z (x)} \leq 1
\quant x \in \fT \quant z \in \OV\bS_{1/2}.
$$
For each $p$ in $[1, \infty]$ we write $\de(p)$ for $\bigmod{1/p -1/2}$ and $p'$ for 
the conjugate index $p/(p-1)$.
For any nonnegative real number $t$, we denote by $\bS_t$ and $\OV \bS_t$ 
the strip $\{ z \in {\BC}: \mod{\Im z } < t \}$ and its closure, respectively.
If $f$ is a holomorphic function on $\bS_t$, and $v$ is in $(-t,t)$
then~$f_v$ denotes the function on $\BR$ defined by $f_v(u) = f(u+iv)$.
We also denote by $f_{t}$ and $f_{-t}$ the boundary values of $f$, 
when they exist in the sense of distributions.

The spherical Fourier transform $\sft f$ of a radial function $f$ in 
$\lu{\fT}$ is 
$$
\sft f (z) = \sum_{x\in\fT} f(x) \, \phi_z (x) 
\quant z \in \OV\bS_{1/2}.
$$
Since the map $z \mapsto \phi_z$ is even and $\tau$-periodic in the strip $\bS_{1/2}$,  
so is the function $\sft f$.  
We say that a holomorphic function in a strip $\bSp$ is \emph{Weyl-invariant}  
if it satisfies these conditions in $\bSp$.
We denote the torus $\BR/\tau \BZ$ by ${\BT}$, and usually identify it with $[-\tau/2,\tau/2)$. 
Set $\ds c_{{}_G} = \frac{q\log q}{4\pi (q+1)}$.
It is well known \cite[formula~(3), p.~55]{CMS3} that the inversion formula for 
the spherical Fourier transform may be written as follows:
\begin{equation} \label{f: modified inversion}
f(x)
=2\,c_{{}_G}\, q^{-\mod{x}/2} \, \int_{\BT} \wt f(s)\,\bc (-s)^{-1}\, q^{is\mod{x}}\wrt s.  
\end{equation}  
Recall that the Fourier transformation $\cF$ on $\BZ$ has already been defined (see \eqref{f: Fourier T}).
The corresponding inversion formula is  
$$
F(d) = {1\over \tau} \int_{\BT} \cF F(s) \,q^{ids}\, \wrt s
\quant d \in \BZ.  
$$
Clearly $\cF F$ is $\tau$-periodic on $\BR$. 
A distribution $m$ on $\BT$ is said to be in $\Mp{\BT}$ if convolution 
with $\cF^{-1}m$ defines a bounded operator on $\lp{\BZ}$. 
Note that $\Mp{\BT}$ is contained in $\ly{\BT}$, because 
trivially $\Mp{\BT}$ is contained in $\Md{\BT}$, and $\Md{\BT}$
may be identified with $\ly{\BT}$.  

A geodesic ray $\ga$ in $\fT$ is a one-sided sequence 
$\{\ga_n \colon n\in\BN \}$ of points of $\fT$ such that
$d(\ga_i,\ga_j)=|i-j|$ for all nonnegative integers $i$ and $j$. 
We say that $x$ lies on $\ga$ if $x=\ga_n$ for some $n$ in $\BN$. 
Geodesic rays $\{\ga_n \colon n\in\BN \}$ and $\{\ga'_n \colon n\in\BN \}$ 
are identified if there exist integers $i$ and $j$ such that 
$\ga_n=\ga'_{n+i}$ for all $n$ greater than $j$; this identification is 
an equivalence relation. We denote by $\Om$ the set of the equivalence 
classes, which we call boundary of $\fT$, and by $\Om_x$ the set of all
geodesic rays starting at $x$. Note that for every element $\om$ in $\Om$ 
there exists a unique representative geodesic ray in $\Om_x$: we denote 
this geodesic ray by $[x,\om)$. Given two geodesic rays $\ga^+=[x,\om^+)$ and
$\ga^-=[x,\om^-)$ with intersection $\ga^+\cap\ga^-=\{x\}$ we define the
doubly infinite geodesic $\ga=\{\ga_j\colon j\in\BZ\}$ as follows:
$\ga_j = \ga^+_j$ if $j\geq 0$ and $\ga_j = \ga^-_j$ if $j<0$.
If $\om^+$ and $\om^-$ are two elements of $\Om$ there exists a unique 
(up to renumbering) 
geodesic $\{\ga_j\colon j\in\BZ\}$ such that $\om^+$ and $\om^-$ are the 
equivalence classes of $\{\ga_j\colon j\in\BN\}$ and $\{\ga_{-j}\colon j\in\BN\}$ 
respectively. For brevity, we denote this geodesic by $(\om^+,\om^-)$ disregarding 
the labels.


We fix a reference geodesic $\ga=(\om^-,\om^+)$ such that $o$ lies on $\ga$,
and assume that $\ga$ is indexed so that $\ga_0=o$.
Define the height function $h$ (associated to $\om^+$) by the rule
$$
h_{\om^+}(x)
:= \lim_{i \to\infty}(i-d(x,\ga_i)) \quant x\in\fT.
$$
The level sets of the height function are called {\it horocycles} of $\fT$.


We choose (once and for all) an isometry $\si$ of $\fT$ that maps 
$\ga_i$ in $\ga_{i+1}$ for every $i$.  Then, for $j$ in $\BZ$, $\si^{j}$ is 
an isometry of $\fT$ that maps $\gamma_i$ to $\gamma_{i+j}$. The group $G$ 
admits an Iwasawa-type decomposition $G=NAG_o$, investigated in \cite{FTN,V}. 
Denote by $A$ the subgroup of $G$ generated by the one-step translation 
$\si$ and by $N$ the subgroup of $G$ of all the elements that stabilise $\om^+$ 
and at least an element of $\fT$. It is known that $N$ can be characterised 
as the subgroup of~$G$ consisting in the elements that fix all the horocycles with respect to $\om^+$
\cite[Lemma 3.1]{V}. Furthermore, the orbit of an element $x$ of $\fT$
under the action of $N$ is the horocycle which contains $x$ \cite[Corollary 3.2]{V}.

We endow the totally disconnected group $G$ with the Haar measure
such that the mass of the open subgroup $G_o$ is 1.
Thus,  
\begin{equation*}
\int_G f'(g\cdot o)\wrt g 
= \sum_{x\in\fT} f(x)
\end{equation*}
for all finitely supported functions on $\fT$.
The reader can find much more on the group~$G$ in the book of 
A.~Fig\`a-Talamanca and C.~Nebbia~\cite{FTN}.
It is well known that the group~$N$ is unimodular; we normalise its Haar 
measure $\mu$ by requiring that $\mu(N\cap G_o)=1$, as in 
\cite[Lemma 3.3]{V}. The analogy between $G$ and semisimple Lie groups of 
rank one is apparent in the following theorem \cite[Theorem 3.5]{V}.

\begin{theorem}
Let $G$, $N$, $G_o$ and $\si$ be as above. Then for every $g$ in $G$ there exist 
$n$ in $N$, $j$ in $\BZ$ and $g_o$ in $G_o$ such that $g=n\si^j g_o$. Furthermore, if
$f$ is a continuous compactly supported function on $G$, then
$$
 \int_G\, f(g) \wrt g=\int_N\, \sum_{j\in\BZ}\, q^{-j}\, 
 \int_{G_o}\, f(n \si^j g_o)  \wrt g_o  \wrt\mu(n).
$$
\end{theorem}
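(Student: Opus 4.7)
The theorem has two parts: the decomposition $g = n \si^j g_o$ and the integration formula with modular factor $q^{-j}$.

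For the decomposition, first I would verify that $A = \langle \si \rangle$ normalizes $N$. If $n \in N$, then $\si n \si^{-1}$ fixes $\om^+$ (since $\si$ fixes both endpoints of the reference geodesic) and fixes each horocycle setwise (since $\si$ permutes horocycles and $n$ fixes each horocycle), hence lies in $N$. Given this, fix $g \in G$, put $x = g \cdot o$, and let $j := h_{\om^+}(x)$. Since $\si$ shifts the height by $+1$, the point $\si^{-j} \cdot x$ lies on the horocycle of height $0$, the same as $o$. By Corollary 3.2 of [V], $N$ acts transitively on this horocycle, so there is $n_0 \in N$ with $n_0 \cdot o = \si^{-j} \cdot x$. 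Setting $n := \si^j n_0 \si^{-j} \in N$ gives $n \si^j \cdot o = \si^j n_0 \cdot o = x = g \cdot o$, so $g_o := (n \si^j)^{-1} g$ lies in $G_o$ and $g = n \si^j g_o$.

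For the integration formula, the key is to compute the modular function of the conjugation action of $A$ on $(N, \mu)$. Set $K_j := N \cap G_{\ga_j}$, so $K_0 = N \cap G_o$ has $\mu$-mass $1$. One checks $\si K_0 \si^{-1} = N \cap G_{\si \cdot o} = K_1$ and $K_0 \subseteq K_1$, the latter because any $n \in K_0$ fixes $o$ and $\om^+$, hence the unique neighbor $\ga_1$ of $o$ toward $\om^+$. Moreover $K_1$ acts transitively on the $q$ height-$0$ neighbors of $\ga_1$: given two such neighbors $y$ and $y'$, transitivity of $N$ on the height-$0$ horocycle produces $n \in N$ with $n(y) = y'$; since $n$ preserves distances and horocycles, $n(\ga_1)$ is a height-$1$ neighbor of $y'$, and $y'$ has only one such neighbor, namely $\ga_1$, so $n \in K_1$. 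Hence $[K_1 : K_0] = q$, $\mu(K_1) = q$, and by iteration $\mu(\si^j K_0 \si^{-j}) = q^j$. This identifies the modulus $\delta_A(\si^j) = q^j$ of conjugation on $(N, \mu)$.

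With this in hand, the integration formula is the standard Haar measure decomposition for $G = (NA) G_o$: the (left) Haar measure on the semi-direct product $P = NA$ equals $q^{-j} d\mu(n)$ against counting measure on $\BZ$, and since $P \cap G_o = K_0$ has $\mu$-mass $1$ matching the chosen normalization $\int_{G_o} dg_o = 1$, the pushforward of $q^{-j} d\mu(n) \, dg_o$ (with $j$ running over $\BZ$) under $(n, j, g_o) \mapsto n \si^j g_o$ recovers $dg$; the non-uniqueness of the decomposition coming from $K_0$ is precisely reabsorbed by the matching normalizations. The main obstacle is the index computation $[K_1 : K_0] = q$ together with the transitivity claim for $K_1$; once these are clean, verifying the formula on product test functions such as $\One_E(n) \One_{\{j_0\}}(j) \One_F(g_o)$ and extending by the standard density argument to all continuous compactly supported $f$ on $G$ is routine.
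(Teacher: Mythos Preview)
The paper does not give its own proof of this statement; it is quoted verbatim from \cite[Theorem~3.5]{V} and stated without argument, so there is no proof in the paper to compare against.

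That said, your sketch is a sound direct proof. The decomposition argument via the height function and transitivity of $N$ on horocycles is correct, and your index computation $[K_1:K_0]=q$ (via the orbit of $o$ under $K_1$ on the $q$ height-$0$ neighbours of $\ga_1$) correctly pins down the modulus of the $A$-action on $(N,\mu)$, which is exactly the source of the factor $q^{-j}$. The only place requiring more care is the last step: the map $N\times A\times G_o\to G$ is not injective, and its fibres are parametrized by $N\cap G_o$ acting via $(n,g_o)\mapsto (n\,\si^j n_o\si^{-j},\,n_o^{-1}g_o)$ (cf.\ the remark immediately following the theorem in the paper). Your ``product test functions'' $\One_E(n)\One_{\{j_0\}}(j)\One_F(g_o)$ live on $N\times\BZ\times G_o$, not on $G$; to verify the formula one must push these forward and check that integrating over the $K_0$-fibre, with the matching normalizations $\mu(N\cap G_o)=1$ and $\int_{G_o}\!\wrt g_o=1$, yields the identity. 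This is indeed routine, but it is precisely where the normalizations you invoke do their work, so the check should be carried out on $G$.
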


\noindent
We remark that, contrary to what happens in the case of noncompact symmetric spaces,
there is a lack of uniqueness in this Iwasawa-type decomposition.
Indeed, if $g = n\si^jg_o = v\si^{\ell}h_o$, then 
$j=\ell$ and there exists $n_o$ in $N\cap G_o$ such that 
$v=\si^jn_o\si^{-j}$ and $h_o=n_o^{-1}g_o$ (see \cite[Remark 3.6]{V}). 

Going back to the tree, a vertex $x$ is of the form $n\si^j\cdot o$, with
$n$ in $N$ and $j$ in $\BZ$. It is straightforward to prove that the height of $x$ 
(with respect to $\om^+$) is simply $j$.
The next lemma establishes a relation between the height of a point and its distance 
from the origin, and may be seen as an analogue of \cite[Lemma 3]{I1}.

\begin{lemma}\label{l: CoorTran}
For every $n$ in $N$ and for every $j$ in $\BZ$ such that $j \leq d(n\cdot o,o)$ 
$$
d(n\si^j\cdot o,o)
= d(n\cdot o,o)-j.
$$
In particular, this formula holds for every $n$ in $N$ and every nonpositive $j$ in $\BZ$.  
\end{lemma}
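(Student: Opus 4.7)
The plan is to give a geometric proof using horocycles and confluence points in the tree. I would begin by observing that, since $n \in N$ preserves horocycles with respect to $\om^+$, the vertex $n \cdot o$ lies on the horocycle at height $0$, i.e., $h_{\om^+}(n \cdot o) = 0$. Consequently $d(n \cdot o, o)$ is even, and the confluence point $m$ of $o$ and $n \cdot o$ toward $\om^+$ (the first common vertex of the rays $[o,\om^+)$ and $[n \cdot o,\om^+)$) sits at height $k := d(n \cdot o, o)/2$. Since $m$ lies on $[o,\om^+) = \{\ga_i\}_{i \geq 0}$, we must have $m = \ga_k$.

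Next I would identify $n\si^j \cdot o = n \cdot \ga_j$ geometrically. Because $n$ is an isometry fixing $\om^+$, it carries the bi-infinite reference geodesic $\ga = (\om^-,\om^+)$ to another bi-infinite geodesic through $n \cdot o$ with endpoint $\om^+$; since $n$ preserves heights, $n \cdot \ga_j$ is the (unique) vertex of height $j$ on this image geodesic. The key observation is that past the confluence point the rays $[o,\om^+)$ and $[n \cdot o,\om^+)$ coincide as sequences of vertices, which forces $n \cdot \ga_s = \ga_s$ for every integer $s \geq k$. Therefore, for $j$ in the stated range, the geodesic $[n \cdot \ga_j,\om^+)$ ascends $k-j$ steps to reach $\ga_k$ and thereafter coincides with $[o,\om^+)$.

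The conclusion is then computational: for such $j$, the confluence of $n \cdot \ga_j$ and $o$ toward $\om^+$ is again $\ga_k$, so the tree distance is obtained by going up from $n \cdot \ga_j$ to $\ga_k$ and then back down to $o$, yielding
$$
d(n\si^j \cdot o, o) = (k - j) + k = d(n \cdot o, o) - j.
$$
The main subtlety lies in treating the case $j < 0$: here $\ga_j$ does not sit on $[o,\om^+)$ but on the downward half of $\ga$, so one must work with the full bi-infinite geodesic and exploit the fact that $n$, as an isometry fixing $\om^+$, carries $\ga$ to the bi-infinite geodesic through $n \cdot o$ ending at $\om^+$; in particular $[n \cdot \ga_j,\om^+)$ passes through $n \cdot o$ after $-j$ steps and then continues up to $\ga_k$. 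Once this identification is made, the confluence computation proceeds unchanged, and the \emph{in particular} statement for nonpositive $j$ is immediate since $j \leq 0 \leq k$.
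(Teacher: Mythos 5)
Your argument is essentially the paper's own proof: both reduce the distance computation to a confluence point on the reference geodesic and then add the ``up'' and ``down'' legs, $(k-j)+k$. The paper organises this around the confluence point $\ga_\ell$ of $[n\si^j\cdot o,\om^+)$ with $\ga$ and splits into the cases $\ell\le 0$ and $\ell>0$, whereas you work with the confluence point $\ga_k$ of $[n\cdot o,\om^+)$ with $[o,\om^+)$ and the identity $n\cdot\ga_s=\ga_s$ for $s\ge k$; these are the same computation in slightly different clothing.

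One caveat, which applies verbatim to the paper's proof as well: your step ``$[n\cdot\ga_j,\om^+)$ ascends $k-j$ steps to reach $\ga_k$'' tacitly requires $j\le k=d(n\cdot o,o)/2$, not merely $j\le d(n\cdot o,o)=2k$. For $k<j\le 2k$ one has $n\cdot\ga_j=\ga_j$, hence $d(n\si^j\cdot o,o)=j$, and the asserted identity fails unless $j=k$; the paper's assertion $d(n\cdot o,o)=2\ell$ in its second case carries exactly the same hidden restriction. Since only nonpositive $j$ (so certainly $j\le 0\le k$) is used later in the paper, this does not affect anything downstream, and your proof covers the same ground, with the same limitation, as the published one.
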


\begin{proof}
Write $x$ instead of $n\si^j\cdot o$, and denote by $\ga_{\ell}$ the confluence point of $[x,\om^+)$ in 
$\om$, i.e. $[\ga_{\ell},\om_+)=[x,\om_+)\cap\om$ (see also \cite[pag. 6]{CMS2}). 
Note that by definition $\ga_{\ell}$ lies on $[x,\om^+)$, so $\ell\geq j$. We 
observe that such $\ga_{\ell}$ exists, because, by the definition of $N$,
every element of this group fixes a geodesic ray equivalent to $[\ga_j,\om_+)$. 

On a tree the union of two geodesic segments with one extreme in common (but no 
other point) is again a geodesic segment, so
\begin{equation}\label{f: distance}
 d(x,o)=d(x,\ga_{\ell})+d(\ga_{\ell},o).
\end{equation}
Note that $d(\ga_{\ell},o)$ is the absolute value $|\ell|$, as $o$ lies on
the geodesic $\ga$. Moreover, $d(x,\ga_{\ell})$ is always equal to $\ell-j$,
as we already noted that $\ell\geq j$.

Now we consider the cases where $\ell\leq 0$ or $\ell >0$ separately.
If $\ell \leq 0$, then $n$ fixes the origin and \eqref{f: distance} becomes 
$$
d(x,o)
= (\ell-j)-\ell=-j=d(n\cdot o,o)-j.
$$
Otherwise $\ell>0$, and we have $d(n\cdot o,o)=2\ell$, because $n\cdot o$ 
belongs to the same horocycle as $o$. Hence \eqref{f: distance} becomes  
$$
d(x,o)
= (\ell-j)+\ell
= 2\ell-j
= d(n\cdot o,o)-j.  
$$
This concludes the proof of the lemma.
\end{proof}

\noindent
Let $N$ and $A$ be the subgroups of $G$ defined above, and
consider the semi-direct product $NA$, where $A$ acts on $N$ by conjugation.
By \cite[Lemma 3.8]{V} the modular function $\De_{NA}$ of $NA$ is given by 
\begin{equation}\label{f: modular function}
\De_{NA}(n\si^j)=q^{-j} \quant n\in N \quant j\in\BZ.
\end{equation}
By \cite[Theorem 3.5]{V}, we may also identify the convolution
between a $G_o$--right-invariant and $G_o$--bi-invariant 
functions on $G$ with the convolution of the corresponding functions on the group $NA$.
Explicitly, suppose that $f$ is a $G_o$--right
invariant function and that $k$ is a $G_o$--bi-invariant function on~$G$. Then, for $v\in N,$ $j\in \BZ,$ and $g_o\in G_o,$ we have
$$
\begin{aligned}
f*_G k(v\si^jg_0)
& = \int_N\, \sum_{\ell\in\BZ}\, q^{-\ell}\, \int_{G_o}\, f(n \si^\ell h_o)
      \, k(h_o^{-1}\si^{-\ell}n^{-1} v \si^j g_o)  \wrt h_o  \wrt\mu(n) \\
&= \int_N\, \sum_{\ell\in\BZ}\, \De_{NA}(n\si^{\ell})\, f(n \si^\ell)\, k(\si^{-\ell}n^{-1} v \si^j)
      \wrt\mu(n)\\
&= f*_{NA} k (v\si^j),
\end{aligned}
$$
where we have used 
the fact that $G_o$ has total mass $1$. 
By \cite[Theorem 3.5]{V}, the norms of $k$ in $\Cvp{G}$ and in $\Cvp{NA}$ coincide.

For $p$ in $[1,\infty)$, we denote by $Q_p:N\to \BR$ the function defined by 
\begin{equation} \label{f: Qp}
Q_p(n)=q^{-|n\cdot o|/p}.
\end{equation} 

\begin{lemma}\label{l: PoissKer}
Suppose that $p$ is in $[1,2)$.
Then the function $n\mapsto \mod{n\cdot o}^{\ell}\, Q_p(n)$ 
belongs to $\lu{N}$ for each nonnegative integer $\ell$.
\end{lemma}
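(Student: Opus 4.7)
The plan is to compute the integral explicitly as a one-dimensional series over distances and observe that the geometric decay dominates any polynomial factor when $p<2$.

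First I would identify the orbit $N\cdot o$ and the distribution of $|n\cdot o|$ under $\mu$. Since $N$ fixes each horocycle relative to $\omega^+$ and acts transitively on the horocycle through $o$ (by \cite[Corollary 3.2]{V}), one has $N\cdot o=\mathcal{H}_0$, the horocycle of height $0$. The fibres of the orbit map $n\mapsto n\cdot o$ are right cosets of the isotropy group $N\cap G_o$, which has measure $1$ by our normalisation of $\mu$. Because $N$ is unimodular, every fibre carries mass $1$, so
\begin{equation*}
\int_N |n\cdot o|^\ell\, q^{-|n\cdot o|/p}\wrt\mu(n)
=\sum_{x\in\mathcal{H}_0} |x|^\ell\, q^{-|x|/p}.
\end{equation*}

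Next I would count the points of $\mathcal{H}_0$ at a given distance from $o$. A vertex $x\in\mathcal{H}_0$ has the same height as $o$, so the confluence of $[x,\omega^+)$ with $[o,\omega^+)$ occurs at some $\gamma_m$ with $m\geq 0$, and then $|x|=d(x,\gamma_m)+d(\gamma_m,o)=2m$. For $m=0$ only $x=o$ contributes; for $m\geq 1$, the vertex $x$ corresponds to a descending path of length $m$ from $\gamma_m$ whose first step goes to one of the $q-1$ neighbours of $\gamma_m$ other than $\gamma_{m-1}$ and $\gamma_{m+1}$, each subsequent step having $q$ choices. This yields $(q-1)q^{m-1}$ such vertices. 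Therefore
\begin{equation*}
\sum_{x\in\mathcal{H}_0}|x|^\ell q^{-|x|/p}
=\sum_{m=1}^\infty (q-1)q^{m-1}(2m)^\ell q^{-2m/p}
=\frac{(q-1)2^\ell}{q}\sum_{m=1}^\infty m^\ell q^{m(1-2/p)}.
\end{equation*}

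Since $p\in[1,2)$ gives $1-2/p<0$, the geometric factor $q^{m(1-2/p)}$ decays exponentially and dominates the polynomial $m^\ell$, so the series converges. This establishes that $n\mapsto |n\cdot o|^\ell Q_p(n)\in \lu{N}$.

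The only slightly delicate point, and the place where I would be most careful, is the horocycle count: one must verify that the choice forbidding both $\gamma_{m-1}$ and $\gamma_{m+1}$ at the confluence vertex is exactly what singles out confluence precisely at $\gamma_m$, so that no vertex is counted twice (in particular, $o$ is the unique point contributing $m=0$, consistent with the term $(q-1)q^{-1}\cdot 0=0$ being omitted). Everything else is routine bookkeeping, and the argument also clarifies that the range $p<2$ is sharp, since at $p=2$ the series reduces to $\sum_m m^\ell$, which diverges for every $\ell\geq 0$.
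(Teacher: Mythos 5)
Your argument is correct and follows essentially the same route as the paper's: both reduce the integral over $N$ to a one-dimensional series over even distances, using that the fibres of $n\mapsto n\cdot o$ carry unit $\mu$-mass, and conclude by noting that $q^{m(1-2/p)}$ dominates $m^\ell$ when $p<2$. The only difference is that you derive the horocycle sphere count directly, getting $(q-1)q^{m-1}$, whereas the paper quotes $\mu(T_{2m})=q^{m}$ from \cite[Lemma 3.11]{V}; the constant-factor discrepancy between the two counts is immaterial to the convergence of the series.
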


\begin{proof}
For any nonnegative integer $r$, set $T_r := \{v\in N:  v\cdot o\in S_r(o)\}$. 
By \cite[Lemma 3.11]{V}, $\mu(T_r)$ vanishes if $r$ is odd, 
is equal to $1$ if $r=0$, and is equal to $q^{r/2}$ if $r$ is even and nonzero.  Then,
at least if $\ell \geq 1$,  
$$
\begin{aligned}
\int_N \mod{n\cdot o}^{\ell}\,Q_p(n)  \wrt\mu(n) 
& = \sum_{r\geq 1} \, r^\ell \, q^{-r/p}\mu(T_r)+1\\
& = \sum_{j\geq 1} \, (2j)^\ell \, q^{j(1-2/p)}+1,
\end{aligned}
$$
which is convergent, because $1\leq p<2$, as required.     
\end{proof}

\section{A general transference result}
\label{s: A general transference result}

Denote by $\Ga$ a locally compact group, with left Haar measure $\la$. 
Integration will be with respect to $\la$, unless otherwise specified.  
We denote by $\De_{\Ga}$ the modular function on $\Ga$,
i.e. the Radon--Nykodim derivative of $\la$ with respect to the right Haar measure. 
Given ``nice'' functions $f$ and $\kappa$ on $\Ga$, their convolution on $\Ga$ is defined by 
$$
f*\kappa (x) 
= \int_{\Ga}\, f(y)\, \kappa(y^{-1}x) \wrt\la(y),
$$
Recall the following basic convolution inequality 
$$
\bignorm{f*\kappa}{\lp{\Ga}}
\leq \bignorm{f}{\lp{\Ga}} \bignorm{\De_{\Ga}^{-1/p'}\kappa}{\lu{\Ga}}.  
$$
(see e.g. \cite[Corollary~20.14~\rmii\ and \rmiv]{HR}).  
We denote by $\Cvp{\Ga}$ the space of bounded {\it right} convolutors  of
$\lp{\Ga}$. This space is equipped with the operator norm
$$
\bignorm{\kappa}{\Cvp{\Ga}}
= \sup_{\norm{f}{\lp{\Ga}}=1} \bignorm{f*\kappa}{\lp{\Ga}}.
$$
In this section we assume that the locally compact group $\Ga$ is the semi-direct product of two groups 
$M$ and $H$, where $M$ is normal in $\Ga$ and $H$ acts on $M$ by 
conjugation.  Right Haar measures on $M$ and $H$ will be denoted by $\wrt n$ and $\wrt h$, 
respectively. Then $\wrt g=\wrt n\wrt h$ is a right Haar measure on 
$\Ga$. We denote by $\De_{M}$ and $\De_{H}$ the modular functions of 
$M$ and $H$, so that $\wrt\la(n)=\De_{M}(n) \wrt n$ and 
$\wrt\la(h)=\De_{H}(h) \wrt h$ are left Haar measures on $M$ and $H$,  
respectively.  Note that there is a slight abuse of notation here, for 
$\la$ denotes both a left invariant measure on $H$ and a left Haar measure on $M$. 
 
For $h$ in $H$ and $n$ in $M$, denote by $n^h$ the conjugate $hnh^{-1}$,
and by $\cD(h)^{-1}$ the Radon--Nykodim derivative $\wrt(n^h)/\wrt n$.
It is not hard to check that $\cD$ is an homomorphism of $H$, i.e. 
$\cD(hh')=\cD(h)\, \cD(h')$ for every $h$ and $h'$ in $H$.

\begin{remark}\label{rem: cD with left Haar measures}
Observe that 
$
\cD(h)^{-1}
= \wrt\la(n^h)/\wrt\la(n).  
$
Indeed, note that the conjugation by $h$ commutes with the inversion on $M$, i.e. 
$(n^h)^{-1}=(n^{-1})^h$.  Hence 
$$
\int_{M}\, f(n^h) \wrt\la(n)
= \int_{M}\, f\big(\big((n^{-1})^h\big)^{-1}\big) \wrt\la(n).
$$
Now, the inversion in $M$ transforms the left Haar measure to
the right Haar measure, and conversely.  Thus,  
$$
\begin{aligned}
\int_{M}\, f\big(\big(n^{-1})^h\big)^{-1}\big) \wrt\la(n)
& = \int_{M}\, f\big((v^h)^{-1}\big) \wrt v \\
& = \cD(h)\, \int_{M}\, f(n^{-1}) \wrt n \\
& = \cD(h)\, \int_{M}\, f(v) \wrt\la(v).
\end{aligned}
$$
This fact will be used repeatedly in the proof of Theorem~\ref{t: genTransf}.
\end{remark}

\begin{remark}\label{rem: extension of cD}
Observe that $\cD$ may be extended to a homomorphism on the whole group $\Ga$, by setting
$\cD(nh) := \cD(h)$ for all $n$ in $M$ and $h$ in $H$. 
Recall that $(nh)(n_1h_1) =nn_1^{h}hh_1$.  Thus, 
$$
\begin{aligned}
\cD\big((nh)(n_1h_1)\big)
& = \cD\big(nn_1^{h}hh_1\big)
      = \cD(hh_1)
      = \cD(h)\, \cD(h_1)  \\
& = \cD(nh)\, \cD(n_1h_1). 
\end{aligned}
$$
This observation applies to any homomorphism of $H$.  
\end{remark}

\noindent
It is well known that $\cD(h)\, \De_{M}(n)\, \De_{H}(h) \wrt n\wrt h$ is a left 
Haar measure on~$\Ga$ (see \cite[p. 211]{HR}), and that the following integral formulae hold
$$
\begin{aligned}
\int_{\Ga} f(g) \wrt \la(g) 
& = \int_{M} \int_{H}\, f(nh) \, \cD(h)\, \De_{M}(n)\, \De_{H}(h) \wrt n \wrt h \\
& = \int_{H} \int_{M}\, f(hn) \wrt n \wrt h.  
\end{aligned}
$$ 
The space $\lu{M;\Cvp{H}}$ is the set of all distributions $\kappa$ on $\Ga$  
such that for (almost) every $n$ in $M$ the distribution $\kappa(n \cdot)$ 
induces a bounded convolution operator on $\lp{H}$,
and the function $n \mapsto\bignorm{\kappa(n \cdot)}{\Cvp{H}}$ is in $\lu{M}$.
The space $\lu{M;\Cvp{H}}$ is endowed with the norm 
\begin{equation} \label{f: iterated spaces}
\bignorm{\kappa}{\lu{M;\Cvp{H}}}
:= \int_{M} \, \bignorm{\kappa(n \cdot)}{\Cvp{H}} \wrt\la(n).
\end{equation}


\begin{theorem}\label{t: genTransf}
Suppose that $p$ is in $(1,\infty)$ and that $\De_{M}^{-1/p'} \kappa$ belongs to $\lu{M;\Cvp{H}}$.
Then the operator $f \mapsto f*(\cD^{-1/p} \kappa)$ is bounded on $\lp{\Ga}$, and 
$$
\bignorm{f*(\cD^{-1/p} \kappa)}{\lp{\Ga}}
\leq \bignorm{f}{\lp{\Ga}} \bignorm{ \De_{M}^{-1/p'} \kappa}{\lu{M;\Cvp{H}}}.  
$$
\end{theorem}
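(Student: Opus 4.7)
The plan is to compute $(f*\psi)(nh)$ in semi-direct product coordinates $g=nh$ with $n\in M$, $h\in H$, to recognise a right convolution on $H$ for each fixed $M$-parameter, and then to combine Minkowski's integral inequality with the $\Cvp{H}$ hypothesis on $\kappa(n,\cdot)$. The modular-function factor $\cD^{-1/p}$ twisting $\kappa$ is precisely what makes this decoupling work.

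First, I would start from $f*\psi(g)=\int_\Ga f(y)\psi(y^{-1}g)\wrt\la(y)$, write $y=n'h'$, and use the semidirect-product identity $(n'h')^{-1}(nh)=(n'^{-1}n)^{h'^{-1}}h'^{-1}h$. Combining this with $\psi=\cD^{-1/p}\kappa$, the triviality of $\cD$ on $M$, and $\cD(h'^{-1}h)=\cD(h')^{-1}\cD(h)$, one obtains
$$
\cD(h)^{1/p}(f*\psi)(nh)=\int_M\!\int_H f(n'h')\,\cD(h')^{1+1/p}\,\kappa\bigl((n'^{-1}n)^{h'^{-1}}h'^{-1}h\bigr)\,\De_M(n')\De_H(h')\wrt n'\wrt h'.
$$

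Second, for fixed $h'$ I would substitute $v=(n'^{-1}n)^{h'^{-1}}$, so that $n'=n(v^{-1})^{h'}$. The conjugation Jacobian (Remark~\ref{rem: cD with left Haar measures}) combined with the inversion formula on $M$ gives $\De_M(n')\wrt n'=\cD(h')^{-1}\De_M(v)^{-1}\wrt\la(v)$. Redistributing powers and writing the result as an outer integral over $v\in M$ against $\De_M(v)^{-1/p'}\wrt\la(v)$ yields
$$
\cD(h)^{1/p}(f*\psi)(nh)=\int_M\De_M(v)^{-1/p'}\,K_v(n,h)\wrt\la(v),
$$
where, for each fixed $n$ and $v$, the map $h\mapsto K_v(n,h)$ is a right convolution on $H$ of a suitably twisted translate of $f$ against the kernel $k\mapsto \kappa(vk)$.

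Third, I would take $\lp{\Ga}$ norms. The identity $\bignorm{F}{\lp{\Ga}}^p=\int_M\int_H|F(nh)|^p\cD(h)\De_M(n)\De_H(h)\wrt n\wrt h$ is used so that the weight $\cD(h)^{1/p}$ absorbs one factor of $\cD(h)$, reducing matters to a mixed $L^p$ bound on $M\times H$ with weights $\De_M(n)\wrt n$ and $\De_H(h)\wrt h$. Minkowski's integral inequality applied to the outer integral over $v$ moves the $L^p$ norm inside, so it suffices to bound, for each fixed $v$, the $L^p(M\times H)$ norm of $K_v$ by $\bignorm{f}{\lp{\Ga}}\bignorm{\kappa(v,\cdot)}{\Cvp{H}}$. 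For fixed $v$ and $n$ this reduces to the $\Cvp{H}$ convolution bound in the $h$-variable; integrating in $n$ with a change of variable exploiting left-invariance of $\wrt\la$ on $M$ (to absorb the conjugation by $h'$) then reproduces $\bignorm{f}{\lp{\Ga}}$.

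The main obstacle is the second step: carefully tracking the modular factors $\cD(h')^{1+1/p}$, $\De_M(n')$ and $\De_H(h')$ through a conjugation-and-inversion on $M$, and verifying that precisely the weight $\De_M^{-1/p'}$ emerges in front of $\bignorm{\kappa(v,\cdot)}{\Cvp{H}}$. This bookkeeping forces the exact twist $\cD^{-1/p}$ on the left-hand side of the theorem and constitutes the substance of the argument.
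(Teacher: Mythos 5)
Your proposal is correct and follows essentially the same route as the paper's proof: the same semidirect-product expansion of $(n'h')^{-1}(nh)$, the same conjugation change of variables on $M$ (you substitute directly to $v=m^{-1}$ where the paper substitutes to $m$ and inverts at the end), the recognition of an inner right convolution on $H$ after the $\cD^{1/p}$ twist cancels via the homomorphism property, Minkowski's inequality in the $M$-variable, and the final right-translation Jacobian $\De_M(v)^{\pm 1/p}$ that reproduces $\bignorm{f}{\lp{\Ga}}$ and leaves exactly the weight $\De_M^{-1/p'}$. All the modular-function bookkeeping you describe checks out.
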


\noindent
Notice that 
$$
\bignorm{ \De_{M}^{-1/p'} \kappa}{\lu{M;\Cvp{H}}}
= \int_{M} \bignorm{\kappa(n\cdot)}{\Cvp{H}} \, \De_{M}(n)^{-1/p'}  \wrt \la(n).  
$$

\begin{proof}
Notice that $(nh)^{-1}n_1h_1 = h^{-1}n^{-1}n_1h_1 = (n^{-1}n_1)^{h^{-1}}h^{-1}h_1$.
Thus, 
$$
\begin{aligned}
f*(\cD^{-1/p} \kappa)(n_1h_1)  
 = \int_{M}\int_{H}  f(nh) \, \cD^{-1/p}(h^{-1}h)\, \kappa\big((n^{-1}n_1)^{h^{-1}}h^{-1}h_1\big) 
      \, \cD(h) \wrt\la(n) \wrt\la(h).  
\end{aligned}
$$
We change variables ($(n^{-1}n_1)^{h^{-1}} = m^{-1}$) in the integral over $M$.
Then $m^{-1}=h^{-1}n^{-1}n_1 h$, so that $m=h^{-1}n_1^{-1}nh=(n_1^{-1}n)^{h^{-1}}$, and 
$$
\frac{\wrt\la(m)}{\wrt\la(n)}
= \frac{\wrt\la\big((n_1^{-1}n)^{h^{-1}}\big)}{\wrt\la(n_1^{-1}n)}
  \, \frac{\wrt\la(n_1^{-1}n)}{\wrt\la(n)}
= \cD(h).
$$
The last equality follows from the fact that $\cD$ is a homomorphism 
(whence $\cD(h^{-1})^{-1}=\cD(h)$), and from the left invariance of $\la$.
We conclude that $\!\!\wrt\la(n)=\cD^{-1}(h)\!\! \wrt\la(m)$, whence 
$$
\begin{aligned}
f*(\cD^{-1/p}\, \kappa)(n_1h_1) 
= \int_{M} \!\! \wrt\la(m)\int_{H} f(n_1m^hh)\, \cD^{-1/p}(h^{-1}h_1)\,
          \kappa\big(m^{-1}h^{-1}h_1\big) \wrt\la(h).  
\end{aligned}
$$
We set $U(n_1,m,h) := f(n_1m^hh)$, and 
view the inner integral as the convolution on $H$ between $U(n_1,m, \cdot)$
and $\cD^{-1/p}(\cdot)\, \kappa(m^{-1} \cdot)$, evaluated at the point $h_1$.  
Therefore
$$
\begin{aligned}
& \bignorm{f*(\cD^{-1/p}\, \kappa)}{\lp{\Ga}}  \\
& =    \bigg(\int_{M}\, \int_{H}\, |f*(\cD^{-1/p}\, \kappa)(n_1 h_1)|^p\, \cD(h_1) \wrt\la(h_1) 
         \wrt\la(n_1)\bigg)^{1/p}\\
& =    \Bignorm{\Bignorm{\int_{M} [U(n_1,m,\cdot)*_{H} (\cD^{-1/p}\,\kappa)(m^{-1} \cdot)] (h_1) 
         \, \cD^{1/p}(h_1) \wrt\la(m)}{\lp{H}}}{\lp{M}}
\end{aligned}
$$
where the $\lp{M}$ norm is taken with respect to the left Haar measure of~$M$ and the variable $n_1$. 
Observe that the argument of the integral over~$M$ above may be written as
$$
\int_{H} U(n_1,m,h)\, \cD^{-1/p}(h^{-1}h_1)\, \kappa(m^{-1} h^{-1}h_1)\, \cD^{1/p}(h_1) \wrt h.  
$$ 
Since $\cD$ is an homomorphism, this simplifies to 
$$
\int_{H} U(n_1,m,h)\, \cD^{1/p}(h)\, \kappa(m^{-1} h^{-1}h_1) \wrt h 
=  \big[\big(\cD^{1/p}(\cdot)\, U(n_1,m,\cdot)\big)*_{H}\kappa(m^{-1} \cdot)\big] (h_1).  
$$  
Therefore, by Minkowski's integral inequality,  
\begin{equation} \label{f: tricky writing part 2}
\begin{aligned}
\bignorm{f*(\cD^{-1/p} \kappa)}{\lp{\Ga}}
& \leq \int_{M} \!\!\wrt\la(m)\, \Bignorm{\bignorm{\big(\cD^{1/p}(\cdot)\, U(n_1,m,\cdot)\big)
	*_{H} \kappa(m^{-1} \cdot)}{\lp{H}}}{\lp{M}}. 
\end{aligned}
\end{equation}
By assumption, for every $m$ in $M$ the function $\kappa(m^{-1} \cdot)$ is in $\Cvp{H}$, so that 
$$
\begin{aligned}
& \bignorm{\big(\cD^{1/p}(\cdot)\, U(n_1,m,\cdot)\big) *_{H} \kappa(m^{-1} \cdot)}{\lp{H}}  \\
& \leq\bignorm{\cD^{1/p}(\cdot)\, U(n_1,m,\cdot)}{\lp{H}} \bignorm{\kappa(m^{-1} \cdot)}{\Cvp{H}},
\end{aligned}
$$
and 
$$
\begin{aligned}
& \Bignorm{\bignorm{\big(\cD^{1/p}(\cdot)\, U(n_1,m,\cdot)\big) *_{H} \kappa(m^{-1} \cdot)}{\lp{H}} 
    }{\lp{M}} \\
& \leq \Bignorm{\bignorm{\cD^{1/p}(\cdot)\, U(n_1,m,\cdot)}{\lp{H}}}{\lp{M}}
      \bignorm{\kappa(m^{-1} \cdot)}{\Cvp{H}}.
\end{aligned}
$$
Observe that 
$$
\Bignorm{\bignorm{\cD^{1/p}(\cdot)\, U(n_1,m,\cdot)}{\lp{H}}}{\lp{M}}  
= \Big[\int_{M} \int_{H}\bigmod{f(n_1m^hh)}^p \, \cD(h)\wrt\la(n_1) \wrt\la(h)\Big]^{1/p}.  
$$
We change variables ($n_1m^h = n$) in the inner integral, write
$n_1 m^h=\big(n_1^{h^{-1}}m\big)^{h}$, and observe that 
$$
\begin{aligned}
\frac{\wrt\la(n)}{\wrt\la(n_1)}
&= \frac{\wrt\la\big(\big(n_1^{h^{-1}}m\big)^{h}\big)}{\wrt\la\big(n_1^{h^{-1}}m\big)}\,
   \frac{\wrt\la\big(n_1^{h^{-1}}m\big)}{\wrt\la\big(n_1^{h^{-1}}\big)}\,
   \frac{\wrt\la\big(n_1^{h^{-1}}\big)}{\wrt\la(n_1)}\\
&= \cD(h)^{-1}\, \De_{M}(m)\, \cD(h^{-1})^{-1}\\
&= \De_{M}(m),
\end{aligned}
$$
i.e., $\wrt\la(n_1)=\De^{-1}_{M}(m) \wrt\la(n)$.
Then
$$
\begin{aligned}
\Bignorm{\bignorm{\cD^{1/p}(\cdot)\, U(n_1,m,\cdot)}{\lp{H}}}{\lp{M}}
&= \De_{M}^{-1/p}(m)\bignorm{f}{\lp{\Ga}}.
\end{aligned} 
$$
By combining this and \eqref{f: tricky writing part 2}, we obtain that 
\begin{equation}\label{f: semidirect inequality}
\begin{aligned}
\bignorm{f*\kappa}{\lp{\Ga}}
& \leq \bignorm{f}{\lp{\Ga}}  \int_{M} 
            \bignorm{\kappa(m^{-1} \cdot)}{\Cvp{H}}\, \De_{M}^{-1/p}(m) \wrt\la(m) \\
& = \bignorm{f}{\lp{\Ga}}  \int_{M} 
            \bignorm{\kappa(m \cdot)}{\Cvp{H}}\, \De_{M}^{1/p}(m) \wrt m;
\end{aligned}
\end{equation}
the equality above is a consequence of the change of variables ($m^{-1}\mapsto m$), 
which transforms the left Haar measure into the right Haar measure.
Finally, 
$$
\De_{M}^{1/p}(m) \wrt m
= \De_{M}^{-1/p'}(m)\, \De_{M}(m) \wrt m
= \De_{M}^{-1/p'}(m) \wrt\la(m),
$$ 
which, together with \eqref{f: semidirect inequality}, gives
$$
\bignorm{f*\kappa}{\lp{\Ga}}
\leq \bignorm{f}{\lp{\Ga}}\bignorm{\De_{M}^{-1/p'} \kappa}{\lu{M;\Cvp{H}}},
$$
as required.
\end{proof}

\noindent
We shall apply Theorem~\ref{t: genTransf} when $M$ is unimodular.
For the reader's convenience, we state the 
corresponding results in the following corollary.  

\begin{corollary}\label{cor: transference M unimodular}
Suppose that $p$ is in $(1, \infty)$ and that $M$ is unimodular. Assume that $\kappa$
belongs to $\lu{M;\Cvp{H}}$. Then the operator $f \mapsto f*(\cD^{-1/p}\, \kappa)$ 
is bounded on $\lp{\Ga}$. Furthermore, 
$$
\bignorm{f*(\cD^{-1/p}\, \kappa)}{\lp{\Ga}}
\leq \bignorm{f}{\lp{\Ga}} \bignorm{\kappa}{\lu{M;\Cvp{H}}}.  
$$
\end{corollary}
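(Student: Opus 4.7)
The plan is to observe that the corollary is the direct specialisation of Theorem~\ref{t: genTransf} to the case in which the normal subgroup $M$ is unimodular. Under this assumption the modular function $\De_M$ is identically equal to~$1$, so the weight $\De_M^{-1/p'}$ that appears in the statement and in the integrand defining $\bignorm{\De_M^{-1/p'}\kappa}{\lu{M;\Cvp{H}}}$ collapses to~$1$.

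Concretely, I would apply Theorem~\ref{t: genTransf} to the given $\kappa$. Its hypothesis requires that $\De_M^{-1/p'}\kappa$ belong to $\lu{M;\Cvp{H}}$, which by $\De_M\equiv 1$ is equivalent to $\kappa\in\lu{M;\Cvp{H}}$, precisely the assumption of the corollary. The conclusion of Theorem~\ref{t: genTransf} then reads
$$
\bignorm{f*(\cD^{-1/p}\kappa)}{\lp{\Ga}}
\leq \bignorm{f}{\lp{\Ga}}\bignorm{\De_M^{-1/p'}\kappa}{\lu{M;\Cvp{H}}}
= \bignorm{f}{\lp{\Ga}}\bignorm{\kappa}{\lu{M;\Cvp{H}}},
$$
where in the last equality we again used the unimodularity of $M$ (and, correspondingly, that $\wrt\la(n) = \wrt n$ in the definition \eqref{f: iterated spaces} of the iterated norm). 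This is exactly the inequality asserted in the corollary, and the boundedness of $f\mapsto f*(\cD^{-1/p}\kappa)$ on $\lp{\Ga}$ follows.

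There is no genuine obstacle here: the only point worth double-checking is the compatibility of the norm $\bignorm{\kappa}{\lu{M;\Cvp{H}}}$ defined via the left Haar measure $\wrt\la(n)$ on $M$ with the formula obtained after removing the weight $\De_M^{-1/p'}$, but this is immediate from $\De_M\equiv 1$, which makes the left and right Haar measures on $M$ coincide.
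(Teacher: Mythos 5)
Your proposal is correct and is exactly how the paper treats this corollary: the paper states it without a separate proof, as the immediate specialisation of Theorem~\ref{t: genTransf} obtained by setting $\De_M\equiv 1$, which is precisely your argument. Nothing further is needed.
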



\section{$L^p$ spherical multipliers on a tree}
\label{s: Spherical multipliers on a tree}

%



In this section we prove our main result.  We first need a lemma on convolutors of $\lp{\BZ}$
whose Fourier transforms extend to holomorphic functions in a strip.  For each positive $\vep$,
we set $\Sigma_\vep:=\{z \in \BC: -\vep < \Im z <0\}$.  

\begin{theorem} \label{t: conv lpBZ}
Suppose that $p$ is in $[1,\infty)$, that $\vp$ is in $\Cvp{\BZ}$, and that
$\cF \vp$ extends to a bounded holomorphic function in the strip 
$\Sigma_\vep$ for some positive $\vep$.  Then 
$$
\bignorm{\vp \, \One_{[J,\infty)}}{\Cvp{\BZ}}
\leq \bignorm{\vp}{\Cvp{\BZ}} + \Big(\frac{1}{q^{\vep}-1} + J\Big) \,\bignorm{\cF\vp}{H^\infty(\Sigma_\vep)}
\quant J \in \BN.  
$$
\end{theorem}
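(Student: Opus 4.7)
The plan is to bound the truncated convolutor by decomposing the removed portion and estimating each piece either trivially (via $\ell^1$) or via a contour shift that uses the holomorphic extension of $\cF\vp$.

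First I would write
\[
\vp\cdot\One_{[J,\infty)}=\vp-\vp\cdot\One_{[0,J-1]}-\vp\cdot\One_{(-\infty,-1]}
\]
and apply the triangle inequality for $\bignorm{\cdot}{\Cvp{\BZ}}$. Since $\ell^1(\BZ)\hookrightarrow\Cvp{\BZ}$ with norm $1$ (by Young's inequality), it suffices to bound the two removed pieces in $\ell^1$. For the finite piece, Fourier inversion gives $|\vp(d)|\leq\bignorm{\cF\vp}{L^\infty(\BT)}\leq\bignorm{\cF\vp}{H^\infty(\Sigma_\vep)}$, so
\[
\bignorm{\vp\cdot\One_{[0,J-1]}}{\ell^1(\BZ)}\leq J\,\bignorm{\cF\vp}{H^\infty(\Sigma_\vep)}.
\]

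The substantive step is the estimate on the tail $\vp\cdot\One_{(-\infty,-1]}$. Because $\cF\vp$ is $\tau$-periodic and bounded holomorphic on $\Sigma_\vep$, for every $t\in(0,\vep)$ a rectangular Cauchy-theorem argument (the two vertical sides cancel by periodicity) allows me to shift the contour in the inversion formula and write
\[
\vp(d)=\frac{1}{\tau}\int_{\BT}\cF\vp(s-it)\,q^{id(s-it)}\wrt s=\frac{q^{dt}}{\tau}\int_{\BT}\cF\vp(s-it)\,q^{ids}\wrt s,
\]
so that $|\vp(d)|\leq q^{dt}\bignorm{\cF\vp}{H^\infty(\Sigma_\vep)}$ for every $d\in\BZ$. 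Summing over $d\leq-1$ and using the geometric series,
\[
\sum_{d\leq-1}|\vp(d)|\leq\bignorm{\cF\vp}{H^\infty(\Sigma_\vep)}\sum_{d\leq-1}q^{dt}=\frac{1}{q^t-1}\,\bignorm{\cF\vp}{H^\infty(\Sigma_\vep)}.
\]
Letting $t\to\vep^-$ (possible since the left-hand side is independent of $t$) yields the bound $\frac{1}{q^\vep-1}\bignorm{\cF\vp}{H^\infty(\Sigma_\vep)}$ for this tail.

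Combining the three estimates gives the inequality in the theorem. The only step requiring any care is the contour shift: strictly speaking one should work with $t\in(0,\vep)$ so as to stay inside the open strip where $\cF\vp$ is holomorphic and bounded, and then pass to the limit only at the very end, in the summed geometric series. Everything else is elementary.
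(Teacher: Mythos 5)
Your proof is correct and follows essentially the same route as the paper: the same three-term decomposition, the same $\ell^1\hookrightarrow \Cvp{\BZ}$ bound for the finite piece, and the same periodic contour shift to obtain the exponential decay $|\vp(d)|\leq q^{dt}\bignorm{\cF\vp}{H^\infty(\Sigma_\vep)}$ for the left tail. Your extra care in shifting only to $t\in(0,\vep)$ and passing to the limit afterwards is a slight refinement of the paper's argument, which shifts directly to the boundary line $-i\vep$.
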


\begin{remark} \label{rem: conv lpBZ}
The conclusion fails for a generic convolutor of $\lp{\BZ}$.  For instance, 
it is well known that the function $\ds \vp(j):= j^{-1} \, \One_{\BZ\setminus\{0\}}(j)$
is in $\Cvp{\BZ}$ for all $p$ in $(1,\infty)$.  However, 
$\vp \One_{[0,\infty)}$ is not in $\Cvp{\BZ}$ for any $p$ in $(1,\infty)$.  
Indeed, if $\vp\One_{[0,\infty)}$ were a 
convolutor of $\lp{\BZ}$, then it would be a finite measure, because 
$\vp\One_{[0,\infty)}$ is nonnegative and $\BZ$ is amenable.  
This contradicts the fact that $\vp\One_{[0,\infty)}$ is nonintegrable on $\BZ$.  
\end{remark}

\begin{proof}
Observe that $\cF \vp$ is $\tau$-periodic in the strip $\Sigma_\vep$.  
A standard argument based on Cauchy's theorem allows us to 
move the path of integration from $[-\tau/2,\tau/2]$ to $[-\tau/2,\tau/2]-i\vep$ 
(the integrals over the vertical sides of the rectangle $[-\tau/2,\tau/2]\times [-\vep,0]$ 
cancel out by periodicity), and obtain that 
$$
\begin{aligned}
\vp(j)
= \frac{1}{\tau} \, \int_{\BT} \cF \vp(s) \, q^{ijs} \wrt s 
= \frac{1}{\tau} \, \int_{\BT} \cF \vp(s-i\vep) \, q^{ij(s-i\vep)} \wrt s.
\end{aligned}
$$
Hence 
$$
\bigmod{\vp(j)}
\leq \bignorm{\cF\vp}{H^\infty(\Sigma_\vep)} q^{j\vep}
\quant j \in \BZ, 
$$
so that $\vp\One_{(-\infty,-1]}$ is integrable on $\BZ$, and 
$$
\bignorm{\vp\One_{(-\infty,-1]}}{\Cvp{\BZ}}
\leq \bignorm{\vp\One_{(-\infty,-1]}}{\lu{\BZ}}
\leq \frac{1}{q^{\vep}-1} \bignorm{\cF\vp}{H^\infty(\Sigma_\vep)}.  
$$ 
Furthermore, trivially 
$$
\bigmod{\vp(j)}
\leq \bignorm{\cF\vp}{\ly{\BT}} 
\quant j \in \BZ, 
$$
whence the function $\vp \One_{[0,J-1]}$ satisfies the estimate
$$
\bignorm{\vp\One_{[0,J-1]}}{\Cvp{\BZ}}
\leq \bignorm{\vp\One_{[0,J-1]}}{\lu{\BZ}}
\leq J \bignorm{\cF\vp}{\ly{\BT}}.
$$ 
As a consequence  
$$
\begin{aligned}
\bignorm{\vp\One_{[J,\infty)}}{\Cvp{\BZ}}
& \leq \bignorm{\vp}{\Cvp{\BZ}} + \bignorm{\vp\One_{(-\infty,-1]}}{\Cvp{\BZ}}
     +   \bignorm{\vp\One_{[0,J-1]}}{\lu{\BZ}} \\
& \leq \bignorm{\vp}{\Cvp{\BZ}} + \frac{1}{q^{\vep}-1} \bignorm{\cF\vp}{H^\infty(\Sigma_\vep)} 
     + J \, \bignorm{\cF\vp}{\ly{\BT}} \\  
& \leq \bignorm{\vp}{\Cvp{\BZ}} + \Big(\frac{1}{q^{\vep}-1} + J\Big) \,  
     \bignorm{\cF\vp}{H^\infty(\Sigma_\vep)},
\end{aligned}
$$
as required.  
\end{proof}

\noindent
Recall that $\de(p) = \bigmod{1/p-1/2}$ and that $\bS_{\de(p)}
= \{z \in \BC: \bigmod{\Im(z)} < \de(p) \}$.  
The main result of this paper is the following.

\begin{theorem}\label{t: Lp1Tree}
Suppose that $p$ is in $[1,\infty)\setminus \{2\}$, and that $k$ is a radial function on $\fT$.  
The following are equivalent:
\begin{enumerate} 
\item[\itemno1]
$k$ is in $\Cvp{\fT}$;
\item[\itemno2]
$\wt k$ is a holomorphic Weyl-invariant function on $\bSp$, and 
$\wt k_{\de(p)}$ is in $\Mp{\BT}$.  
\end{enumerate}
Furthermore, there exists positive constants $c$ and $C$, independent of $k$, such that 
$$
c\, \bignorm{\wt k_{\de(p)}}{\Mp{\BT}} 
\leq \bignorm{k}{\Cvp{\fT}}
\leq C\, \bignorm{\wt k_{\de(p)}}{\Mp{\BT}}.  
$$
\end{theorem}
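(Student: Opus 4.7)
The direction $\rmi\Rightarrow\rmii$ is essentially contained in \cite[Theorem~2.1]{CMS2}, which yields that $\wt k$ extends to a bounded holomorphic function on $\bSp$ with boundary values in $\Mp{\BT}$; Weyl-invariance is built into the definition of $\wt k$ through the $\tau$-periodic even kernel $z\mapsto\phi_z$. All the work lies in the converse, $\rmii\Rightarrow\rmi$.

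Using the standard duality $\bignorm{k}{\Cvp{\fT}}=\bignorm{k}{Cv_{p'}(\fT)}$ together with $\de(p)=\de(p')$, we may assume $p\in[1,2)$ and set $\de:=\de(p)>0$. The first step is a contour-shift in the spherical inversion formula~\eqref{f: modified inversion}: by Weyl-invariance and holomorphy of $\wt k$ on $\bSp$, one rewrites
\begin{equation*}
k(x)=2c_{{}_G}\,\tau\, q^{-|x|/p}\,\kappa_0(|x|),
\end{equation*}
where $\kappa_0$ is the sequence on $\BZ$ whose Fourier transform on $\BT$ is $m(s):=\wt k_\de(s)\,\bc(-s-i\de)^{-1}$. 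A direct computation from the formula for $\bc$ shows that $\bc(-\,\cdot\,-i\de)^{-1}$ is analytic in an open neighbourhood of $\OV\BT$ (this is where $p\neq 2$, equivalently $\de<1/2$, is needed, to avoid the zeros of $\bc$). Hence $m\in\Mp{\BT}$ with $\bignorm{m}{\Mp{\BT}}\le C\bignorm{\wt k_\de}{\Mp{\BT}}$; equivalently $\kappa_0\in\Cvp{\BZ}$ with the same bound. Moreover $m$ extends to a bounded holomorphic function on $\Sigma_\vep$ for every $0<\vep<2\de$, so Theorem~\ref{t: conv lpBZ} will be applicable.

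The core step is to transfer these $\Cvp{\BZ}$ estimates to $\Cvp{NA}=\Cvp{\fT}$ (using the Iwasawa-type identification of the preceding section) via Corollary~\ref{cor: transference M unimodular} with $M=N$ unimodular and $H=A\simeq\BZ$. Since $\cD(n\si^j)=\De_{NA}(n\si^j)=q^{-j}$ by~\eqref{f: modular function}, the factorisation $k(n\si^j)=\cD^{-1/p}(n\si^j)\,\kappa(n\si^j)$ forces
\begin{equation*}
\kappa(n\si^j)=2c_{{}_G}\tau\, q^{-j/p}\, q^{-|n\si^j\cdot o|/p}\,\kappa_0\bigl(|n\si^j\cdot o|\bigr).
\end{equation*}
Setting $\ell_n:=|n\cdot o|/2$ and combining Lemma~\ref{l: CoorTran} with the observation that every $n\in N$ fixes $\ga_\ell$ for all $\ell\ge\ell_n$ yields the two-piece distance formula $|n\si^j\cdot o|=2\ell_n-j$ for $j\le\ell_n$ and $|n\si^j\cdot o|=j$ for $j>\ell_n$. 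Substituting and using the shift- and reflection-invariance of $\bignorm{\cdot}{\Cvp{\BZ}}$ on the abelian group $\BZ$, the function $j\mapsto\kappa(n\si^j)$ becomes $Q_p(n)$ times a reflected and translated copy of $\kappa_0\One_{[\ell_n,\infty)}$ on the left half $j\le\ell_n$, plus an exponentially decaying right tail whose $\lu{\BZ}$-norm is controlled by $\bignorm{m}{\ly{\BT}}$ via a geometric series. Applying Theorem~\ref{t: conv lpBZ} to the left piece gives
\begin{equation*}
\bignorm{\kappa(n,\cdot)}{\Cvp{\BZ}}\le C\,Q_p(n)\bigl[\bignorm{\kappa_0}{\Cvp{\BZ}}+(1+\ell_n)\bignorm{m}{H^\infty(\Sigma_\vep)}\bigr],
\end{equation*}
and integrating over $N$ with the aid of Lemma~\ref{l: PoissKer} (applied for $\ell=0$ and $\ell=1$) bounds $\bignorm{\kappa}{\lu{N;\Cvp{\BZ}}}$ by $C\bignorm{\wt k_{\de(p)}}{\Mp{\BT}}$. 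Corollary~\ref{cor: transference M unimodular} then delivers the required bound $\bignorm{k}{\Cvp{\fT}}\le C\bignorm{\wt k_{\de(p)}}{\Mp{\BT}}$.

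The main obstacle is the need for the one-sided truncation estimate Theorem~\ref{t: conv lpBZ}: as Remark~\ref{rem: conv lpBZ} makes explicit, truncating a generic $\Cvp{\BZ}$-multiplier destroys the multiplier property, so the saving comes entirely from the holomorphic extension of $m$ across $\BT$ inherited from the analyticity of $\wt k$ on $\bSp$. The polynomial growth $1+\ell_n$ produced by this truncation is then precisely balanced against the exponential weight $Q_p(n)$ through Lemma~\ref{l: PoissKer}, which is the quantitative heart of the argument; a secondary difficulty is deriving the two-piece distance formula above, since Lemma~\ref{l: CoorTran} controls $|n\si^j\cdot o|$ directly only on the left half.
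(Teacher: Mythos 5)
Your proposal is correct in substance and follows the paper's overall strategy (contour shift in the inversion formula, transference via Corollary~\ref{cor: transference M unimodular} with $M=N$ and $H=A\simeq\BZ$, the truncation estimate of Theorem~\ref{t: conv lpBZ}, and the integrability supplied by Lemma~\ref{l: PoissKer}), but it diverges from the paper in how the nonnegative-height part of the kernel is handled. The paper splits $k=k\chi^-+k\chi^+$ at height $j=0$, runs the transference argument only on $k\chi^-$ (where Lemma~\ref{l: CoorTran} gives $|v\si^j\cdot o|=|v\cdot o|-j$), and disposes of $k\chi^+$ by a separate direct estimate of $\bignorm{\De_{NA}^{-1/p'}k\chi^+}{\lu{NA}}$, which hinges on the evenness of the Abel transform of $\mod{k}$ (\cite[Theorem~2.5]{CMS2}) to reflect the sum over $j\ge0$ back to negative heights; that step also uses $p<2<p'$ for convergence of the resulting series. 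You instead split at $j=\ell_n=|n\cdot o|/2$, prove the two-piece distance formula $|n\si^j\cdot o|=2\ell_n-j$ for $j\le\ell_n$ and $|n\si^j\cdot o|=j$ for $j>\ell_n$ (which is correct: $n$ fixes $\ga_i$ for $i\ge\ell_n$, and Lemma~\ref{l: CoorTran} is invoked only in the range $j\le\ell_n$ where it is safe), and absorb the right tail as an explicit geometric series inside the single application of the transference corollary. This is more self-contained --- it replaces the citation of the Abel-transform symmetry by elementary geometry --- at the cost of having to establish the distance formula for $j>\ell_n$, which is not literally in Lemma~\ref{l: CoorTran}; the paper's route avoids that geometric lemma but imports a nontrivial external fact. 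Both give the same final bound.

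Two slips worth fixing. First, your parenthetical ``this is where $p\neq 2$, equivalently $\de<1/2$, is needed, to avoid the zeros of $\bc$'' is wrong on both counts: $p\neq2$ is equivalent to $\de(p)>0$ (not $\de(p)<1/2$; indeed $\de(1)=1/2$), and the point is not the zeros of $\bc$ (the zeros of $z\mapsto\bc(-z)$ sit on $\Im z=-1/2$ and are never met) but that $\de>0$ produces the weight $q^{-|x|/p}$ and the holomorphic extension of $\cF\vp$ across $\BT$ on which Theorem~\ref{t: conv lpBZ} feeds. Second, you reduce to $p\in[1,2)$ and then invoke Corollary~\ref{cor: transference M unimodular}, which is stated only for $p\in(1,\infty)$; the endpoint $p=1$ needs a word --- the paper treats it separately by showing directly that $\vp\in\lu{\BZ}$, hence $k\in\lu{\fT}$ --- although your scheme also degenerates harmlessly to an $\ell^1$ estimate there. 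Neither issue affects the architecture of the argument.
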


\begin{proof}
It is known that \rmi\ implies \rmii\ and that the left hand inequality above holds 
(see \cite[Theorem~2.1]{CMS3}).  

Thus, it remains to show that \rmii\ implies \rmi.  
Observe that it suffices to prove the result in the case where $p$ is in $[1,2)$.  Indeed, if 
$p$ is in $(2, \infty)$, and $\wt k_{\de(p)}$ is in $\Mp{\BT}$,
then $\wt k_{\de(p)}$ is also in $\cM_{p'}(\BT)$.  Since $p'$ is in $(1,2)$,
$k$ is in $Cv_{p'}(\fT)$.  A straightforward duality argument then
shows that $k$ is in $\Cvp{\fT}$, as required.  Here we use the fact that $k$ is radial.  

Henceforth we assume that $p$ is in $[1,2)$.  
By the inversion formula \eqref{f: modified inversion}, 
\begin{equation} \label{f: inversion k}
k(x)
=  2 c_{{}_{G}}\, q^{-|x|/2}\, \int_{\BT} \, \wt k(s) \, \bc(-s)^{-1} \, q^{is|x|}  \wrt s 
\quant x\in\fT.
\end{equation} 
The integrand in \eqref{f: inversion k} above is $\tau$-periodic, and holomorphic
in the rectangle $(-\tau/2,\tau/2)\times (-\de(p),\de(p))$.
A standard argument based on Cauchy's theorem allows us to 
move the path of integration from $[-\tau/2,\tau/2]$ to $[-\tau/2,\tau/2]+i\de(p)$ 
(the integrals over the vertical sides of the rectangle $[-\tau/2,\tau/2]\times [0,\de(p)]$ 
cancel out by periodicity), and obtain that 
$$
k(x) 
= 2 c_{{}_{G}}\,q^{-|x|/p} \int_{\BT} \wt k\big(s+i\de(p)\big) \, \bc\big(-s-i\de(p)\big)^{-1}
    \, q^{is|x|}  \wrt s. 
$$
We write $\big(\wt k \check \bc^{-1}\big)_{\de(p)}$ instead of
$\wt k(\cdot +i \de(p))\, \bc(-\cdot -i\de(p))^{-1}$,
and introduce the function~$\vp$ on $\BZ$, defined by 
$$
\vp(\ell)
= 2 c_{{}_{G}}\,\int_{\BT}  \big(\wt k \check \bc^{-1}\big)_{\de(p)} (s)\, q^{is\ell}  \wrt s.  
$$
Then 
\begin{equation} \label{f: formula for k}
k(x) 
=  q^{-|x|/p}\, \vp(|x|).
\end{equation} 
Suppose first that $p=1$.  We must prove that $k$ belongs to $\lu{\fT}$.  Since
$$
\bignorm{k}{\lu{\fT}}
= \sum_{x \in \fT} \, q^{-\mod{x}}\, \bigmod{\vp(|x|)}
= \bigmod{\vp(0)} + \frac{q+1}{q} \, \sum_{d = 1}^\infty \, \bigmod{\vp(d)},
$$
it suffices to prove that $\vp$ is in $\lu{\BZ}$.  Obviously, 
$$
\vp 
= 2c_{{}_G} \tau \, \cF^{-1} \big(\wt k \check \bc^{-1}\big)_{\de(p)} 
= 2c_{{}_G} \tau \, \cF^{-1} \wt k_{\de(p)} *_{{}_\BZ} \cF^{-1}(\check \bc^{-1})_{\de(p)}.
$$
Since $(\check \bc^{-1})_{\de(p)}$ is smooth on $\BT$, its inverse Fourier transform
$\cF^{-1}(\check \bc^{-1})_{\de(p)}$ is in $\lu{\BZ}$, by classical Fourier analysis.  Furthermore
$\cF^{-1} \wt k_{\de(p)}$ is in $\lu{\BZ}$ by assumption.  Therefore~$\vp$ is in $\lu{\BZ}$
and the proof in the case where $p=1$ is complete.  

Now assume that $p$ is in $(1,2)$. 
Denote by $\chi^+$ and $\chi^-$ the functions on $\fT$ defined by 
$$
\chi^+(v\si^j\cdot o) 
= \One_{[0,\infty)} (j) 
\qquad\hbox{and}\qquad 
\chi^-(v\si^j\cdot o) 
= \One_{(-\infty,-1]} (j),
$$
where $v\in N$ and $j\in \BZ.$ Clearly  
$$
k
= k\chi^- +k\chi^+.
$$
It is convenient to view the kernel $k$ as a function on the group $NA$.
In particular, we use formula \eqref{f: formula for k}, 
the definition of $\chi^-$ above, change variables (see Lemma~\ref{l: CoorTran}), recall that
$Q_p(v) = q^{-\mod{v\cdot o}/p}$ (see formula~\eqref{f: Qp}), 
and obtain that 
\begin{equation} \label{f: kchimeno}
\begin{aligned}
\big(k\chi^-\big)(v\si^j\cdot o) 
& = q^{-(|v\cdot o|-j)/p} \, \vp(|v\cdot o|-j) \, \One_{(-\infty,-1]}(j) \\
& = q^{j/p} \, Q_p(v) \, \vp(|v\cdot o|-j) \, \One_{(-\infty,-1]}(j).  
\end{aligned}
\end{equation}  

\emph{Step I: analysis of $k\chi^-$}.
Observe that 
$$
\begin{aligned}
\bigmod{\vp(j)} 
& \leq 2 c_{{}_{G}} \, \int_{\BT}\, |\big(\wt k \check \bc^{-1}\big)_{\de(p)} (s)| \wrt s  \\ 
& \leq 2 c_{{}_{G}} \tau \bignorm{\big(\wt k \check \bc^{-1}\big)_{\de(p)}}{\infty} \\
& \leq 2 c_{{}_{G}} \tau \bignorm{\wt k \check \bc^{-1}}{\ly{\bSp}}
\end{aligned}
$$
for every integer $j$.  As a consequence we obtain the following pointwise bound 
\begin{equation}\label{f: pointwise bound k}
\bigmod{(k\chi^-)(v\si^j\cdot o)} 
\leq 2 c_{{}_{G}} \tau \bignorm{\wt k \check \bc^{-1}}{\ly{\bSp}} \, q^{j/p}\, 
\One_{(-\infty,-1]}(j) \, Q_p(v),  
\end{equation}
which we record for later use.  
Formula \eqref{f: kchimeno} and Corollary~\ref{cor: transference M unimodular} 
(with $\Ga=NA$, $\cD(v \si^j)=q^{-j}$ and $\kappa = \cD^{1/p}k\chi^-$) imply that  
$$
\begin{aligned}
\bignorm{k\chi^-}{\Cvp{NA}}
& \leq \bignorm{\cD^{1/p}k\chi^-}{\lu{N;\Cvp{\BZ}}} \\ 
& =    \int_N  \bignorm{\vp(|v\cdot o|-\cdot)\, \One_{(-\infty,-1]}}{\Cvp{\BZ}} \, Q_p(v) \wrt v.
\end{aligned}
$$
Clearly the norm in $\Cvp{\BZ}$ is translation invariant; it is then straightforward to check that 
$$
\bignorm{\vp(|v\cdot o|-\cdot)\One_{(-\infty,-1]}}{\Cvp{\BZ}} 
= \bignorm{\vp\One_{[\mod{v\cdot o},\infty)}}{\Cvp{\BZ}}.
$$
By Theorem~\ref{t: conv lpBZ} (with $2\de(p)$ in place of $\vep$, and $\mod{v\cdot o}$ in place of $J$) 
$$
\bignorm{\vp\One_{[\mod{v\cdot o},\infty)}}{\Cvp{\BZ}}
\leq  \bignorm{\vp}{\Cvp{\BZ}} + \Big(\frac{1}{q^{2\de(p)}-1}+\mod{v\cdot o}\Big)  
\bignorm{\cF\vp}{H^\infty(\Sigma_{2\de(p)})}.  
$$
By definition of $\vp$ and of the multiplier norm, 
$\bignorm{\vp}{\Cvp{\BZ}} 
= 2 c_{{}_{G}} \tau\bignorm{\big(\wt k\check \bc^{-1}\big)_{\de(p)}}{\Mp{\BT}}$.  
Notice that the function $\check \bc_{\de(p)}^{-1}$ is smooth on $\BR$, and never vanishes.
Therefore there exists a constant $C$ such that 
$$
\bignorm{\big(\wt k\check \bc^{-1}\big)_{\de(p)}}{\Mp{\BT}}
\leq C \bignorm{\wt k_{\de(p)}}{\Mp{\BT}}.  
$$
Furthermore,
$$
\begin{aligned}
(2c_{{}_G}\tau)^{-1} \, \bignorm{\cF\vp}{H^\infty(\Sigma_{2\de(p)})}
& =     \bignorm{(\wt k\check \bc^{-1})_{\de(p)}}{H^\infty(\Sigma_{2\de(p)})}  \\
& =     \max \, \big[\bignorm{(\wt k\check \bc^{-1})_{\de(p)}}{L^\infty(\BT)},
             \bignorm{(\wt k\check \bc^{-1})_{-\de(p)}}{L^\infty(\BT)}\big] \\
& \leq  \bignorm{\check\bc^{-1}}{H^\infty(\bSp)} 
          \, \max \, \big[\bignorm{\wt k_{\de(p)}}{L^\infty(\BT)},
             \bignorm{\wt k_{-\de(p)}}{L^\infty(\BT)}\big] \\
& =     \bignorm{\check\bc^{-1}}{H^\infty(\bSp)} \, \bignorm{\wt k_{\de(p)}}{L^\infty(\BT)} \\
& \leq  \bignorm{\check\bc^{-1}}{H^\infty(\bSp)} \, \bignorm{\wt k_{\de(p)}}{\Mp{\BT}};
\end{aligned}
$$
we have used the Weyl-invariance of $\wt k$ in the last equality above.
By combining the formulae above, we obtain that 
$$
\bignorm{k\chi^-}{\Cvp{NA}}
\leq C \bignorm{\wt k_{\de(p)}}{\Mp{\BT}} \int_N  (1+ \mod{v\cdot o}) \, Q_p(v) \wrt v 
\leq C \bignorm{\wt k_{\de(p)}}{\Mp{\BT}};
$$
the last inequality follows from Lemma~\ref{l: PoissKer}. 

\emph{Step II: analysis of $k\chi^+$}.
Recall that the modular function on $NA$ is $\De_{NA}(v\si^j)=q^{-j}$ (see \eqref{f: modular function}).  
Thus, 
$$
\begin{aligned}
\bignorm{\De_{NA}^{-1/p'} k\chi^+}{\lu{NA}}
& = \sum_{j\in\BZ}\, q^{-j}\, q^{j/p'}\, \int_N\, |k\chi^+(v\si^j\cdot o)|  \wrt\mu(v)\\
& = \sum_{j\geq 0}\, q^{-j/p}\, \int_N\, |k(v\si^j\cdot o)|  \wrt\mu(v).
\end{aligned}
$$
Recall that the Abel transform (see \cite{CMS2}) of $\mod{k}$ is  defined by 
$$
\cA(\mod{k})(j) 
= q^{-j/2}\, \int_N\, \bigmod{k(v\si^j\cdot o)} \wrt\mu(v).
$$  
By \cite[Theorem 2.5]{CMS2}, the Abel transform of $\mod{k}$ 
is an even function on $\BZ$, equivalently 
$$
\int_N\, \bigmod{k(v\si^j\cdot o)} \wrt\mu(v)
= q^{j} \, \int_N\, \bigmod{k(v\si^{-j}\cdot o)} \wrt\mu(v).  
$$
Altogether, we see that 
$$
\bignorm{\De_{NA}^{-1/p'} k\chi^+}{\lu{NA}}
= \sum_{j\geq 0}\, q^{j/p'}\, \int_N\, |k(v\si^{-j}\cdot o)|  \wrt\mu(v).
$$
By the pointwise bound \eqref{f: pointwise bound k} the right hand side is dominated by 
$$
2 c_{{}_{G}} \, \tau \bignorm{\wt k \check \bc^{-1}}{\ly{\bSp}} 
\sum_{j\geq 0}\, q^{j/p'}\, q^{-j/p} \, \int_N\, Q_p(v) \wrt\mu(v).  
$$
Now, the integral over $N$ is convergent, because $p>1$ (see Lemma~\ref{l: PoissKer}), 
and so is the series, because $p<2<p'$.  Therefore 
\begin{equation} \label{f: est kappa1}
\bignorm{\De_{NA}^{-1/p'} k\chi^+}{\lu{NA}}
\leq C  \bignorm{\wt k \check \bc^{-1}}{H^\infty(\bSp)}  
\leq C  \bignorm{\wt k}{H^\infty(\bSp)}:
\end{equation}
the last inequality follows from the fact that $\check\bc^{-1}$ is bounded on $\bSp$.  
Since $\wt k$ is bounded and Weyl-invariant on $\bSp$,  
$$
\bignorm{\wt k}{H^\infty(\bSp)}
\leq \bignorm{\wt k_{\de(p)}}{L^\infty(\BT)}
\leq \bignorm{\wt k_{\de(p)}}{\Mp{\BT}}.
$$

\emph{Step III: conclusion}.  
By combining the estimates proved in \emph{Step~I} and \emph{Step~II}, we see that 
there exists a constant $C$, independent of $k$, such that 
$$
\begin{aligned}
\bignorm{k}{\Cvp{NA}}
& \leq \bignorm{k\chi^+}{\Cvp{NA}} + \bignorm{k\chi^-}{\Cvp{NA}} \\
& \leq C \bignorm{\wt k_{\de(p)}}{\Mp{\BT}}.  
\end{aligned}
$$
Since $k$ is radial on $\fT$, $\bignorm{k}{\Cvp{NA}}=\bignorm{k}{\Cvp{\fT}}$.  
Thus, 
$k$ is in $\Cvp{\fT}$ and the required norm estimate holds.  

This concludes the proof of the theorem.
\end{proof}

\end{document}